\theoremstyle{plain}
\newtheorem{corollary}{Corollary}
\newtheorem{definition}{Definition}
\newtheorem{example}{Example}
\newtheorem{lemma}{Lemma}
\newtheorem{theorem}{Theorem}
\numberwithin{equation}{section}
\DeclareMathOperator{\lk}{lk}
\newcommand{\field}[1]{\mathbb{#1}}
\newcommand{\N}{\field{N}}
\begin{document}
\title{$f$-vectors implying vertex decomposability}
\author{Micha\l\ Laso\'{n}}
\thanks{This publication is partly supported by the Polish National Science Centre grant no. 2011/03/N/ST1/02918.}
\address{Theoretical Computer Science Department, Faculty of Mathematics and
Computer Science, Jagiellonian University, S.Łojasiewicza 6, 30-348 Krak\'{o}w, Poland}
\address{Institute of Mathematics of the Polish Academy of Sciences, \'{S}%
niadeckich 8, 00-956 Warszawa, Poland}
\email{michalason@gmail.com}
\subjclass[2010]{05E45, 05E40, 13F55}
\keywords{vertex decomposable simplicial complex, $f$-vector, Kruskal-Katona theorem, Cohen-Macaulay module, Stanley-Reisner ring}

\begin{abstract}
We prove that if a pure simplicial complex $\Delta$ of dimension $d$ with $n$ facets has the least possible number of $(d-1)$-dimensional 
faces among all complexes with $n$ faces of dimension $d$, then it is vertex decomposable. 
This answers a question of J. Herzog and T. Hibi. In fact we prove a generalization of their theorem using combinatorial methods. 
\end{abstract}

\maketitle

\section{Introduction}

We call a simplicial complex \emph{pure} if all its facets are of the same dimension.

\begin{definition} A pure simplicial complex $\Delta$ of dimension $d$ and $n$ facets is called \emph{extremal} if it has the least possible number of 
$(d-1)$-dimensional faces among all complexes with $n$ faces of dimension $d$. 
\end{definition}

In particular, for $d=0$ all zero dimensional complexes are extremal, since all of them have exactly one $(-1)$-dimensional face, namely the empty set.
\smallskip 

In this paper we generalize and prove by only combinatorial means the following theorem of Herzog and Hibi form 1999.

\begin{theorem}
\label{main}\emph{(\cite{hehi}, Theorem 2.3)} An extremal simplicial complex is Cohen-Macaulay over an arbitrary field.
\end{theorem}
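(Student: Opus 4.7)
The plan is to prove the stronger statement that every extremal pure simplicial complex is vertex decomposable; since vertex decomposable implies shellable implies Cohen--Macaulay over any field, this settles Theorem \ref{main}.

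By the Kruskal--Katona theorem, the minimum number of $(d-1)$-faces for a pure $d$-complex with $n$ facets is determined by the cascade (Macaulay) expansion
\[ n = \binom{a_{d+1}}{d+1} + \binom{a_d}{d} + \cdots + \binom{a_s}{s}, \qquad a_{d+1} > a_d > \cdots > a_s \geq s \geq 1, \]
and the minimum is attained precisely by compressed families. These are, after relabelling the vertices, the initial segments of length $n$ in the colexicographic order on $(d+1)$-subsets of $\N$. I would therefore assume without loss of generality that $\Delta$ is generated by this colex initial segment.

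The induction is on $n+d$, with the base cases $n=1$ and $d=0$ being immediate, and the case $n = \binom{a_{d+1}}{d+1}$ giving the $d$-skeleton of a simplex, which is classically vertex decomposable. Otherwise I would choose $v = a_{d+1}+1$, the largest vertex used by $\Delta$. The deletion $\Delta \setminus v$ then consists of the first $\binom{a_{d+1}}{d+1}$ facets of the colex segment, i.e.\ of all $(d+1)$-subsets of $\{1,\ldots,a_{d+1}\}$, which is an extremal $d$-complex with strictly fewer facets. The link $\lk_{\Delta}(v)$ is generated by the $d$-sets $B \subseteq \{1,\ldots,a_{d+1}\}$ with $\{v\} \cup B \in \Delta$, and these are precisely the first $\binom{a_d}{d} + \cdots + \binom{a_s}{s}$ $d$-subsets of $\{1,\ldots,a_{d+1}\}$ in colex; a second application of Kruskal--Katona, one dimension lower, identifies $\lk_\Delta(v)$ as extremal. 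The inductive hypothesis yields vertex decomposability of both pieces.

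It remains to check that $v$ is a shedding vertex, i.e.\ that every facet of $\Delta \setminus v$ is a facet of $\Delta$; equivalently, each facet $F \ni v$ of $\Delta$ must have $F \setminus \{v\}$ contained in some facet of $\Delta$ avoiding $v$. Since $F \setminus \{v\}$ is a $d$-subset of $\{1,\ldots,a_{d+1}\}$ and $a_{d+1} \geq d+1$, extending by any missing element of $\{1,\ldots,a_{d+1}\}$ produces such a facet. The main obstacle I anticipate is the precise verification that the colex initial-segment structure is exactly preserved on passage to link and deletion along the cascade, together with a clean treatment of the boundary regimes of the expansion; once this bookkeeping is done, the induction mechanics and the shedding-vertex check are routine.
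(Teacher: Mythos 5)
Your overall strategy (prove vertex decomposability, then invoke the chain vertex decomposable $\Rightarrow$ shellable $\Rightarrow$ Cohen--Macaulay) is the same as the paper's, but the reduction on which your whole induction rests is false. You assume that an extremal complex is, after relabelling vertices, the initial segment of length $n$ in the colexicographic (squashed) order. The Kruskal--Katona theorem only says that this initial segment \emph{attains} the minimum of $\left\vert\Delta\mathfrak{U}\right\vert$; it does not say it is the \emph{unique} minimizer up to isomorphism, and in general it is not. For a concrete counterexample take $d=1$ and $n=4$: here $4=\binom{3}{2}+\binom{1}{1}$, so $\delta_1(4)=\binom{3}{1}+\binom{1}{0}=4$, and the $4$-cycle has $4$ edges and $4$ vertices, hence is extremal; but the $4$-cycle is $2$-regular, while the colex initial segment (a triangle with a pendant edge) has a vertex of degree $3$, so they are not isomorphic. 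Your argument therefore only proves that the colex complexes themselves are vertex decomposable, which misses essentially all of the content of the theorem: the whole difficulty is to handle an arbitrary extremal complex, about whose structure one knows only the single numerical equality $f_{d-1}=\delta_d(f_d)$.

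The paper closes exactly this gap with Hilton's double-counting argument. Writing $B_i=\{A\in\mathfrak{U}: i\notin A\}$ and $C_i=\{A\setminus\{i\}: i\in A\in\mathfrak{U}\}$, one shows that either $\mathfrak{U}$ consists of all $(d+1)$-subsets of its vertex set (where any vertex works) or some vertex $i$ satisfies $\left\vert\Delta B_i\right\vert>\left\vert C_i\right\vert$. Comparing the decomposition $\Delta\mathfrak{U}=\Delta B_i\cup C_i\cup(\{i\}(\cup)\Delta C_i)$ against the Kruskal--Katona lower bound then forces, for an extremal $\Delta$, that $C_i\subset\Delta B_i$ and that both $\Delta\setminus i=[B_i]$ and $\lk_\Delta\{i\}=[C_i]$ are again extremal (with $\Delta\setminus i$ pure of the right dimension). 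That is the shedding vertex; no normal form for $\Delta$ is ever assumed. Your treatment of the special colex case (choosing the top vertex, identifying link and deletion, checking purity) is fine as far as it goes, but it cannot be promoted to the general case by a relabelling.
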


Their proof is algebraic and uses results from \cite{arhehi} and \cite{eare}. In fact they asked for a combinatorial proof. 
We give it by proving that an extremal simplicial complex is vertex decomposable. It is well-known that vertex decomposable complexes are Cohen-Macaulay.
Our proof goes along the lines of the proof of Kruskal-Katona inequality.
We start with a presentation of some necessary preliminaries.

\subsection{Vertex decomposable and Cohen-Macaulay complexes}

For a simplex $\sigma $ in a simplicial complex $\Delta$, the simplicial
complex 
$$\{\tau \in \Delta :\tau \cap \sigma =\emptyset\text{ and }\tau\cup \sigma \in \Delta \}$$ 
is called a \emph{link} of $\sigma $ in $\Delta $, and denoted by $\lk_{\Delta}\sigma$. 
For a vertex $x$ of $\Delta$ by $\Delta\setminus x$ we mean the simplicial complex $\{\tau \in \Delta : x\notin\tau\}$.

\begin{definition}
A pure simplicial complex $\Delta$ is vertex decomposable if one of the following holds:
\begin{enumerate}
\item $\Delta$ is empty,
\item $\Delta$ is a single vertex,
\item for some vertex $x$ both $\lk_{\Delta }\{x\}$ and $\Delta\setminus x$ are pure and vertex decomposable.
\end{enumerate}
\end{definition}

\begin{definition}
For a simplicial complex $\Delta $ on the set of vertices $\{1,\dots ,n\}$
and a given field $\mathbb{K}$, the \emph{Stanley-Reisner ring} (the \emph{face ring}) is 
$\mathbb{K}[\Delta]:=\mathbb{K}[x_{1},\dots ,x_{n}]/I$, where $I$ is
generated by all square-free monomials $x_{i_{1}}\cdots x_{i_{l}}$ for which 
$\{i_{1},\dots ,i_{l}\}$ is not a face in $\Delta $.
\end{definition}

When we say that a simplicial complex is \emph{Cohen-Macaulay} we always
mean that its Stanley-Reisner ring has this property.

The following is a folklore result (we refer the reader to e.g. \cite{bj}).

\begin{theorem}\label{bjorner}
For a simplicial complex $\Delta$ the following implications hold:
$$\Delta\text{ is vertex decomposable}\Rightarrow\Delta\text{ is shellable}\Rightarrow\Delta\text{ is  Cohen-Macaulay over any field}.$$
\end{theorem}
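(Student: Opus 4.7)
My plan is to prove the two implications separately, each by induction.

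For vertex decomposable $\Rightarrow$ shellable, I would induct on the number of vertices. Given a shedding vertex $x$, the purity condition on $\Delta\setminus x$ forces its facets to coincide with the facets of $\Delta$ not containing $x$, and the inductive hypothesis yields shellings $F_1,\dots,F_s$ of $\Delta\setminus x$ and $G_1,\dots,G_t$ of $\lk_\Delta\{x\}$. I would then claim that the concatenation $F_1,\dots,F_s,\,G_1\cup\{x\},\dots,G_t\cup\{x\}$ is a shelling of $\Delta$. The verification at the seam is the interesting part: the intersection of $\langle G_j\cup\{x\}\rangle$ with $\langle F_1,\dots,F_s\rangle$ is exactly $\langle G_j\rangle$, a codimension-one face of $\partial(G_j\cup\{x\})$ opposite $x$, because every $G_j$ sits in some $F_i$ by purity but $x$ is absent from all $F_i$; meanwhile, the intersection with earlier cones $G_i\cup\{x\}$ is the join of $\{x\}$ with the shelling-controlled intersection inside $\lk_\Delta\{x\}$, yielding a pure codimension-one subcomplex consisting of faces through $x$.

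For shellable $\Rightarrow$ Cohen-Macaulay I would go through Reisner's criterion: it suffices to show that for every face $\sigma$ of $\Delta$, the reduced simplicial homology $\tilde H_i(\lk_\Delta\sigma;\mathbb{K})$ vanishes for $i<\dim\lk_\Delta\sigma$. The plan rests on two ingredients. The first is that a pure shellable complex of dimension $d$ is homotopy equivalent to a wedge of $d$-spheres (possibly contractible). I would prove this by induction on the length of the shelling: passing from $\Delta_{j-1}=\langle F_1,\dots,F_{j-1}\rangle$ to $\Delta_j$, the attaching subcomplex $\partial F_j\cap \Delta_{j-1}$ is pure of codimension one in $\partial F_j$; if it equals $\partial F_j$ one glues on a new $d$-sphere, otherwise it is a proper union of facets of $\partial F_j$, hence a ball, hence contractible, and the homotopy type is unchanged. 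The second ingredient is that links of pure shellable complexes are themselves pure shellable, by taking the induced order on those $F_i$ containing $\sigma$ and forming $F_i\setminus\sigma$.

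The main obstacle is the link-shellability ingredient. The subtlety is verifying that when we restrict the shelling $F_1,\dots,F_s$ to the subsequence $F_{i_1},\dots,F_{i_r}$ of facets containing $\sigma$, the intersection $\langle F_{i_k}\setminus\sigma\rangle\cap\langle F_{i_1}\setminus\sigma,\dots,F_{i_{k-1}}\setminus\sigma\rangle$ remains pure of codimension one in $\partial(F_{i_k}\setminus\sigma)$. The key observation is that this restricted intersection corresponds, via removing $\sigma$, to the faces of $\langle F_{i_k}\rangle\cap\langle F_1,\dots,F_{i_k-1}\rangle$ that contain $\sigma$; since codimension-one faces of $F_{i_k}$ containing $\sigma$ correspond bijectively to codimension-one faces of $F_{i_k}\setminus\sigma$, purity is preserved under the restriction. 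With both ingredients in hand, Reisner's criterion reduces Cohen-Macaulayness to the wedge-of-spheres homotopy type of each link, which has only top homology.
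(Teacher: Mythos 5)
The paper does not actually prove this statement: it is quoted as a folklore result with a pointer to Bj\"orner's Handbook chapter, so there is no in-paper argument to compare against, and what you supply is the standard proof of both implications (Provan--Billera for the first, Bj\"orner/Reisner for the second). Your argument is essentially correct: the concatenated shelling with the seam analysis you describe, and the two ingredients for the second implication (the wedge-of-spheres homotopy type via the dichotomy on the attaching subcomplex $\partial F_j\cap\Delta_{j-1}$, and heredity of shellability to links by restricting the shelling order to the facets containing $\sigma$) are exactly how this is done in the literature; combined with Reisner's criterion this yields Cohen--Macaulayness over every field, since a wedge of top-dimensional spheres has reduced homology only in top degree and that homology is free. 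One small point to patch in the first implication: with the paper's definition of vertex decomposability, $\Delta\setminus x$ is only required to be pure, not to have the same dimension as $\Delta$. If every facet of $\Delta$ contains $x$, then the facets of $\Delta\setminus x$ are \emph{not} facets of $\Delta$ (indeed $\Delta\setminus x=\lk_{\Delta}\{x\}$ as complexes), and your concatenated list $F_1,\dots,F_s,G_1\cup\{x\},\dots,G_t\cup\{x\}$ would list non-facets. In that degenerate case $\Delta$ is a cone with apex $x$, and the list $G_1\cup\{x\},\dots,G_t\cup\{x\}$ alone is the required shelling; when at least one facet of $\Delta$ avoids $x$, purity of $\Delta\setminus x$ does force its facets to be exactly the facets of $\Delta$ missing $x$, and your seam verification goes through as written.
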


We recall a combinatorial description of Cohen-Macaulay complexes.

\begin{theorem}\label{cm}
\label{reisner} \emph{(\cite{re})} Let $R=\mathbb{K}[\Delta ]$ be
the face ring of $\Delta $. Then the following conditions are equivalent:
\begin{enumerate}
\item $R$ is Cohen-Macaulay ring,
\item $\tilde{H}_{i}(\lk_{\Delta}\sigma;\mathbb{K})=0$ for $i<dim(\lk_{\Delta }\sigma)$
for all simplices $\sigma \in \Delta $.
\end{enumerate}
\end{theorem}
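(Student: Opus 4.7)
The plan is to prove Reisner's criterion by combining the standard local cohomology characterization of Cohen-Macaulayness with Hochster's formula, which expresses the local cohomology of $R=\mathbb{K}[\Delta]$ in terms of the reduced cohomology of links. Set $d=\dim\Delta$, so $\dim R=d+1$. A finitely generated graded module over a graded Noetherian ring is Cohen-Macaulay precisely when its local cohomology with respect to the irrelevant maximal ideal $\mathfrak{m}=(x_1,\dots,x_n)$ vanishes below its Krull dimension. Thus condition~(1) is equivalent to $H^i_{\mathfrak{m}}(R)=0$ for every $i<d+1$, and the task becomes translating this vanishing into a combinatorial condition on links.

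The next step is to exploit the fine $\mathbb{Z}^n$-grading on $R$, which is inherited by $H^i_{\mathfrak{m}}(R)$, and compute the local cohomology via the \v{C}ech complex on the variables $x_1,\dots,x_n$. A direct analysis then yields Hochster's formula: all nonzero graded components of $H^i_{\mathfrak{m}}(R)$ sit in squarefree negative degrees $-\sigma$ with $\sigma\in\Delta$, and for such $\sigma$
$$\dim_{\mathbb{K}}H^i_{\mathfrak{m}}(R)_{-\sigma}=\dim_{\mathbb{K}}\tilde{H}^{i-|\sigma|-1}(\lk_{\Delta}\sigma;\mathbb{K}).$$
Granting this, the vanishing in condition~(1) translates into $\tilde{H}^{i-|\sigma|-1}(\lk_{\Delta}\sigma;\mathbb{K})=0$ for all $\sigma\in\Delta$ and all $i<d+1$; substituting $j=i-|\sigma|-1$ gives $\tilde{H}^j(\lk_{\Delta}\sigma;\mathbb{K})=0$ for $j<d-|\sigma|=\dim\lk_{\Delta}\sigma$. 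Since over a field reduced simplicial homology and cohomology have the same $\mathbb{K}$-dimension, this is precisely condition~(2).

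The main obstacle is Hochster's formula itself. One must show that in degree $-\sigma$ only monomials whose support is a face of $\Delta$ containing $\sigma$ contribute to the graded pieces of the \v{C}ech complex, and then identify the induced differential — up to an index shift and a sign convention — with the simplicial coboundary of $\lk_{\Delta}\sigma$. The remaining ingredients (the local cohomology criterion for Cohen-Macaulayness, Grothendieck vanishing above $\dim R$, and the homology-cohomology duality of finite-dimensional chain complexes over a field) are standard; the combinatorial-algebraic identification of the \v{C}ech complex with the cochain complexes of links is the technical heart.
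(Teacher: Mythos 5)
The paper does not prove this theorem; it quotes it from Reisner \cite{re} as a known result, so your proposal has to be judged against the literature rather than against an in-paper argument. The route you describe --- Grothendieck's depth characterization via local cohomology at $\mathfrak{m}$ combined with Hochster's formula for $H^i_{\mathfrak{m}}(\mathbb{K}[\Delta])$ --- is the canonical modern proof (Stanley, Bruns--Herzog), and your index bookkeeping for the reduction is essentially right. But as written it is a reduction, not a proof: all of the technical content sits in Hochster's formula, which you assert rather than establish. Until the degree $-a$ strand of the \v{C}ech complex is actually identified with the shifted augmented cochain complex of $\lk_{\Delta}(\mathrm{supp}\,a)$, nothing has been proved. (A minor correction on the statement itself: the nonzero graded components of $H^i_{\mathfrak{m}}(R)$ occur in all degrees $-a$ with $a\in\mathbb{Z}^n_{\geq 0}$ and $\mathrm{supp}\,a\in\Delta$, not only the squarefree ones; this is harmless for your argument since the dimension of the component depends only on the support.)

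There is also a concrete gap in the last step. You substitute $d-|\sigma|=\dim\lk_{\Delta}\sigma$, which is false for non-pure complexes. What the local cohomology vanishing is actually equivalent to is $\tilde{H}^{j}(\lk_{\Delta}\sigma;\mathbb{K})=0$ for all $j<d-|\sigma|$, whereas condition (2) only demands vanishing in the possibly smaller range $j<\dim\lk_{\Delta}\sigma$. To obtain the stated equivalence you must show that each condition separately forces $\Delta$ to be pure, after which $\dim\lk_{\Delta}\sigma=d-|\sigma|$ holds for every face. For the local cohomology condition this is quick: if $F$ were a facet with $|F|\leq d$, then $-1<d-|F|$ and $\lk_{\Delta}F=\{\emptyset\}$, so the vanishing would force $\tilde{H}_{-1}(\{\emptyset\};\mathbb{K})=0$, a contradiction. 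For condition (2) purity follows by a standard induction on dimension using connectedness of the links. With Hochster's formula proved and this purity lemma supplied, your argument closes up and coincides with the standard one.
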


For classical techniques of counting homologies we refer the reader to \cite{ha}, \cite{st}. For entertaining ones we advise Section 3.2 of \cite{lami}.

\subsection{Kruskal-Katona theorem}

One of the most natural questions concerning simplicial complexes is: \newline
\emph{What is the minimum number of $(k-1)$-element faces in simplicial complex with $n$ faces of
size $k$?}\newline 
This question was answered independently by Kruskal \cite{kr} and
Katona \cite{ka} in 1960's. For a positive integer $k$, they enlisted all $k$-element subsets of integers in the following order, called the 
\emph{squashed order}: $A<B$ if $\max (A\setminus B)<\max (B\setminus A)$. Let $%
S_{k}(n)$ be the set of first $n$ sets in this list. For a given set $\mathfrak{U}$
of $k$-element sets, denote by $\Delta \mathfrak{U}$ the set of all $%
(k-1)$-element sets which are contained in some member of $\mathfrak{U}$. The
Kruskal-Katona theorem reads as follows.

\begin{theorem}
\label{KrKt} For a positive integers $n,k$ and a set $\mathfrak{U}$ of $n$ sets of
size $k$ we have
\begin{equation*}
\left\vert {\Delta\mathfrak{U}}\right\vert \geq \left\vert {\Delta S_{k}(n)}%
\right\vert.
\end{equation*}
\end{theorem}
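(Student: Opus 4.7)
The natural approach, which parallels the original Kruskal and Katona arguments, is a \emph{shifting} (or \emph{compression}) proof: we replace $\mathfrak{U}$ step by step with another $n$-element family whose shadow is no larger and whose members are systematically pushed towards smaller elements, until we reach a family that can be compared directly with $S_k(n)$.

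Concretely, for each pair $i<j$ define the elementary compression $C_{ij}$: replace every $A\in\mathfrak{U}$ with $j\in A$, $i\notin A$, and $(A\setminus\{j\})\cup\{i\}\notin\mathfrak{U}$ by the swap $(A\setminus\{j\})\cup\{i\}$, and leave other sets fixed. This preserves $|\mathfrak{U}|$ and strictly decreases the monovariant $\sum_{A\in\mathfrak{U}}\sum_{a\in A}a$, so after finitely many steps the family stabilises at a \emph{shifted} $\mathfrak{U}^{*}$ satisfying: whenever $A\in\mathfrak{U}^{*}$, $j\in A$, $i<j$, $i\notin A$, then $(A\setminus\{j\})\cup\{i\}\in\mathfrak{U}^{*}$. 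The key lemma is $|\Delta C_{ij}(\mathfrak{U})|\le|\Delta\mathfrak{U}|$, which I would prove by constructing an explicit injection from $\Delta C_{ij}(\mathfrak{U})\setminus\Delta\mathfrak{U}$ into $\Delta\mathfrak{U}\setminus\Delta C_{ij}(\mathfrak{U})$: any genuinely new $(k-1)$-face $B$ must contain $i$ and miss $j$, and it is sent to the swap $(B\setminus\{i\})\cup\{j\}$, which one checks is lost from the shadow.

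For a shifted family $\mathfrak{U}^{*}$ I would split according to the largest used vertex $m$: write $\mathfrak{U}^{*}=\mathfrak{U}_{0}\sqcup\mathfrak{U}_{1}$ with $\mathfrak{U}_{1}:=\{A\in\mathfrak{U}^{*}:m\in A\}$, and set $\mathfrak{U}_{1}':=\{A\setminus\{m\}:A\in\mathfrak{U}_{1}\}$. Shiftedness (applied to the pair $m,i$ for any $i\notin A$) guarantees $\mathfrak{U}_{1}'\subseteq\Delta\mathfrak{U}_{0}$ outside a trivial boundary case, and so the shadow separates cleanly into the part missing $m$ and the part containing $m$, yielding $|\Delta\mathfrak{U}^{*}|=|\Delta\mathfrak{U}_{0}|+|\Delta\mathfrak{U}_{1}'|$. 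An induction on $n+k$, together with the cascade decomposition $n=\binom{a_{k}}{k}+\binom{a_{k-1}}{k-1}+\cdots+\binom{a_{s}}{s}$ and the parallel decomposition of $S_{k}(n)$ into its initial $\binom{a_{k}}{k}$ sets and the rest, closes the argument.

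The main obstacle I would expect is the verification of the injection in the key lemma of Step~2: the case analysis is small but fiddly, since one must ensure that a new shadow face is mapped to a face that is truly lost and not still present via some other $k$-set untouched by $C_{ij}$. Once this technicality is pinned down, the remainder is essentially bookkeeping with the cascade representation.
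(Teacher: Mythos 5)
The paper does not actually prove Theorem \ref{KrKt}: it quotes the Kruskal--Katona theorem from the literature (Kruskal, Katona, Daykin, Hilton, Aramova--Herzog--Hibi) and only borrows Hilton's decomposition idea for its own Lemma \ref{characterization}. So your argument cannot be ``the same as the paper's''; what you have written is the classical compression (shifting) proof in the style of Kruskal and Daykin, and in outline it is correct. Your key lemma is fine as stated: a genuinely new shadow face $B$ of $C_{ij}(\mathfrak{U})$ must contain $i$ and miss $j$, and $(B\setminus\{i\})\cup\{j\}$ is indeed lost from the shadow --- if it were contained in some $D\in C_{ij}(\mathfrak{U})$, then either $D$ contains both $i$ and $j$ (hence $D\in\mathfrak{U}$ and $B\subseteq D$), or $D$ survived compression because $(D\setminus\{j\})\cup\{i\}\in\mathfrak{U}$ (and that set contains $B$); either way $B\in\Delta\mathfrak{U}$, a contradiction. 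The monovariant argument for termination and the splitting identity $\lvert\Delta\mathfrak{U}^{*}\rvert=\lvert\Delta\mathfrak{U}_{0}\rvert+\lvert\Delta\mathfrak{U}_{1}'\rvert$ (valid because shiftedness gives $\mathfrak{U}_{1}'\subseteq\Delta\mathfrak{U}_{0}$ outside the trivial case $\mathfrak{U}^{*}=\{\{1,\dots,k\}\}$) are also standard and correct. Your route is genuinely different from the one the paper leans on: Hilton's proof, which the paper adapts in Lemma \ref{characterization}, avoids compression entirely and instead finds a good vertex $i$ by an averaging count (Lemma \ref{i}), then splits $\Delta\mathfrak{U}$ as $\Delta B_{i}\cup C_{i}\cup(\{i\}(\cup)\Delta C_{i})$; that version has the advantage of yielding structural information about extremal families, which is exactly what the paper needs, whereas compression is arguably more robust and generalizes to Clements--Lindstr\"{o}m.

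One caution: you locate the main difficulty in the injection of Step 2, but in fact the more delicate point is the final induction, which is not ``essentially bookkeeping.'' Simply adding the two inductive bounds $\lvert\Delta\mathfrak{U}_{0}\rvert\geq\delta_{k-1}(n_{0})$ and $\lvert\Delta\mathfrak{U}_{1}'\rvert\geq\delta_{k-2}(n_{1})$ does not suffice: for $k=2$, $n_{0}=1$, $n_{1}=3$ one has $\delta_{1}(1)+\delta_{0}(3)=3<4=\delta_{1}(4)$. You must feed the containment $\mathfrak{U}_{1}'\subseteq\Delta\mathfrak{U}_{0}$ back into the numerics (it forces $n_{1}\leq\lvert\Delta\mathfrak{U}_{0}\rvert$, ruling out such splits), and then verify a monotonicity/cascade inequality such as $\delta_{k-1}(n_{0})\geq\binom{a_{k}-1}{k-1}+\dots+\binom{a_{t}-1}{t-1}$ forcing the split to be no worse than the one realized by $S_{k}(n)$ itself. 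This is a short but genuine lemma, and the proof is incomplete without it.
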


This result was further generalized by Clements and Lindstr\"{o}m in \cite{clli}. Daykin \cite{da1,da2} gave two simple proofs, 
and later Hilton \cite{hi}
gave another one. For an algebraic proof we refer the reader to \cite{arhehi}. We will work mainly with Hilton's idea. 

Note that the cardinality of $\Delta
S_{k}(n)$ may be easily determined. For a given $k$, each positive integer $n$ can be uniquely expressed as 
\begin{equation*}
n=\displaystyle\binom{a_{k}}{k}+\displaystyle\binom{a_{k-1}}{k-1}+\dots +%
\displaystyle\binom{a_{t}}{t},
\end{equation*}%
with $1\leq t\leq a_{t}$ and $a_{t}<\dots <a_{k}$. We have 
\begin{equation*}
\delta _{k-1}(n):=\left\vert {\Delta S_{k}(n)}\right\vert =\displaystyle%
\binom{a_{k}}{k-1}+\displaystyle\binom{a_{k-1}}{k-2}+\dots +\displaystyle%
\binom{a_{t}}{t-1}.
\end{equation*}

As a consequence of Kruskal-Katona theorem we get:

\begin{corollary}\label{ext} A pure simplicial complex $\Delta$ of dimension $d>0$ with $f$-vector $(f_{0},\dots,f_{d})$
is extremal if and only if $f_{d-1}=\delta_{d}(f_{d})$. 
\end{corollary}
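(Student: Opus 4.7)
The plan is to derive this corollary directly from the Kruskal--Katona theorem. The key observation is that the minimum number of $(d-1)$-dimensional faces among all simplicial complexes with $n$ faces of dimension $d$ equals $\delta_d(n)$, and the definition of extremal then immediately translates to the stated equality $f_{d-1}=\delta_d(f_d)$.

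First I would establish the lower bound. Given an arbitrary simplicial complex $\Gamma$ (not necessarily pure) with $f_d(\Gamma)=n$, I would let $\mathfrak{U}$ be the collection of its $d$-dimensional faces, viewed as $n$ sets of size $d+1$. Because $\Gamma$ is closed under taking subsets, every member of $\Delta\mathfrak{U}$ is a $(d-1)$-face of $\Gamma$, so $f_{d-1}(\Gamma)\geq|\Delta\mathfrak{U}|$. Applying Theorem \ref{KrKt} with $k=d+1$ then yields
\[
f_{d-1}(\Gamma)\;\geq\;|\Delta\mathfrak{U}|\;\geq\;|\Delta S_{d+1}(n)|\;=\;\delta_d(n).
\]

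Next I would verify that this lower bound is attained. The downward closure of $S_{d+1}(n)$ is a pure simplicial complex of dimension $d$ whose facets are precisely the $n$ members of $S_{d+1}(n)$, and whose $(d-1)$-faces are exactly $\Delta S_{d+1}(n)$; hence it realizes $f_{d-1}=\delta_d(n)$.

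Combining both directions, $\delta_d(n)$ is the exact minimum appearing in the definition of extremal, so for a pure $\Delta$ of dimension $d>0$ with $n=f_d$ facets, extremality is equivalent to $f_{d-1}=\delta_d(f_d)$. I do not anticipate a serious obstacle here: the only small point to keep in mind is that in a pure $d$-dimensional complex the $n$ facets coincide with the $n$ faces of dimension $d$ (since $d$ is the top dimension), so the quantity $n$ in the Kruskal--Katona bound matches $f_d$ in the statement.
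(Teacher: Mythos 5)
Your proposal is correct and matches the paper's (implicit) argument: the paper states the corollary as an immediate consequence of Kruskal--Katona without proof, and your write-up simply fills in the routine details — the lower bound $f_{d-1}\geq|\Delta\mathfrak{U}|\geq|\Delta S_{d+1}(f_d)|=\delta_d(f_d)$ and the fact that the downward closure of $S_{d+1}(n)$ attains it. The indexing ($k=d+1$, so $d$-faces are $(d+1)$-sets and $\delta_{k-1}=\delta_d$) is handled correctly.
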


\section{The main result}

For a better understanding of the assumption that $\Delta $ is extremal we will use Hilton's idea from his proof \cite{hi} of the Kruskal-Katona theorem.
First we define sets similar to $S_{k}(n)$. Let $S_{k}^{i}(n)$ denote the
first $n$ sets of $k$-element subsets of integers in the squashed
order ($A<B$ if $\max (A\setminus B)<\max (B\setminus A)$) which do not contain $i$. We also denote by $\{i\}(\cup)\mathfrak{U}$ 
the set $\{\{i\}\cup A:A\in \mathfrak{U}\}$. 

Let $\mathfrak{U}$ be a $n$-element set of $k$-element sets, let $V=\bigcup_{A\in\mathfrak{U}}A$ be an underlying set, 
and let $v$ be its cardinality. For $i\in V$, let $%
B_{i}=\{A\in\mathfrak{U}:i\notin A\}$, $C_{i}=\{A\setminus \{i\}:i\in A\in\mathfrak{U}\}$, and
let $b_{i},c_{i}$ be the respective cardinalities. Note that $c_{i}\neq 0$%
. We want to find an index $i$ such that $\left\vert {\Delta B_{i}}\right\vert
>\left\vert {C_{i}}\right\vert $.

\begin{lemma}
\label{i} Either there exists an $i$ such that $\left\vert {\Delta B_{i}}%
\right\vert >\left\vert {C_{i}}\right\vert $, or $\mathfrak{U}$ consists of all
possible $k$-element subsets of $V$.
\end{lemma}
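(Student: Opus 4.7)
I would argue by contradiction: assume that $|\Delta B_i| \leq |C_i|$ for every $i \in V$ and show that $\mathfrak{U}$ must equal $\binom{V}{k}$.

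For each $(k-1)$-subset $S \subseteq V$, set $\ell(S) = |\{j \in V \setminus S : S \cup \{j\} \in \mathfrak{U}\}|$, so $0 \le \ell(S) \le v-k+1$, and let $a_\ell = |\{S : \ell(S) = \ell\}|$. A straightforward double count yields
$$\sum_{i \in V} |C_i| \;=\; k|\mathfrak{U}| \;=\; \sum_S \ell(S).$$
Since $S \in \Delta B_i$ is equivalent to $i \notin S$ together with the existence of some $j \ne i$ in $V \setminus S$ with $S \cup \{j\} \in \mathfrak{U}$, the counts split according to $\ell(S)$ and give
$$\sum_{i \in V} |\Delta B_i| \;=\; (v-k)\,a_1 \;+\; (v-k+1)\sum_{\ell \ge 2} a_\ell.$$
Summing the hypothesis over $i$ and rearranging produces
$$(v-k-1)\,a_1 \;+\; \sum_{\ell \ge 2} (v-k+1-\ell)\,a_\ell \;\le\; 0.$$

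When $v \ge k+2$, every summand on the left is nonnegative, forcing $a_\ell = 0$ for all $1 \le \ell \le v-k$. Hence each $(k-1)$-subset $S$ has $\ell(S) \in \{0,\,v-k+1\}$: it is either isolated or has every possible extension in $\mathfrak{U}$. Fix any $A \in \mathfrak{U}$ (nonempty, since $V$ is); for each $i \in A$ the set $S = A \setminus \{i\}$ has $\ell(S) \ge 1$ and thus $\ell(S) = v-k+1$, so every single-element replacement of $i$ in $A$ again lies in $\mathfrak{U}$. The single-swap graph on $\binom{V}{k}$ being connected, this propagates to $\mathfrak{U} = \binom{V}{k}$.

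The two boundary cases $v=k$ and $v=k+1$ remain. The first is immediate: $\binom{V}{k} = \{V\}$, so $\mathfrak{U} = \binom{V}{k}$ automatically. The case $v = k+1$ is the main obstacle, since the displayed inequality degenerates to $0 \le 0$ and extracts no information; I would handle it directly. There each $A \in \mathfrak{U}$ has the form $V \setminus \{i\}$, so if $\mathfrak{U} \ne \binom{V}{k}$ then $n \le k$. Pick any $i$ with $V \setminus \{i\} \in \mathfrak{U}$, available because $\mathfrak{U}$ is nonempty; then $B_i = \{V \setminus \{i\}\}$ yields $|\Delta B_i| = \binom{k}{k-1} = k$, while $|C_i| = n-1 \le k-1$, contradicting the standing hypothesis and completing the proof.
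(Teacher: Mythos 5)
Your proof is correct, and its skeleton is the same as the paper's: both arguments compare $\sum_{i\in V}\left\vert\Delta B_i\right\vert$ against $\sum_{i\in V}\left\vert C_i\right\vert=kn$ and then finish by observing that closure under single swaps forces $\mathfrak{U}=\binom{V}{k}$ via connectivity of the swap (Johnson) graph. The genuine difference lies in how the first sum is handled. The paper lower-bounds it by $kn(v-k)/(v-k)$ using the fact that each $(k-1)$-set is contained in at most $v-k$ members of $B_i$, and then reads off the structure of $\mathfrak{U}$ from the equality case of that averaging bound; you instead compute $\sum_i\left\vert\Delta B_i\right\vert$ exactly by stratifying $(k-1)$-sets according to the statistic $\ell(S)$, which converts the hypothesis into the transparent inequality $(v-k-1)a_1+\sum_{\ell\ge 2}(v-k+1-\ell)a_\ell\le 0$. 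What your refinement buys is an honest treatment of the boundary case $v=k+1$: there the paper's averaging inequality is automatically tight and the tightness condition it invokes (``all $v-k$ completions avoiding $i$ lie in $\mathfrak{U}$'') carries no information, so that case genuinely requires the separate direct argument you supply, namely $\left\vert\Delta B_i\right\vert=k>n-1=\left\vert C_i\right\vert$ whenever $\mathfrak{U}$ is a proper subset of $\binom{V}{k}$. For $v=k$ and $v\ge k+2$ the two arguments are essentially equivalent, and your propagation step is exactly the paper's ``delete any element and insert any other.''
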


\begin{proof}
We are going to count the sum of cardinalities of both sets when $i$ runs over all
elements of $V$. Then 
\begin{equation*}
\sum_{i\in V}\left\vert {\Delta B_{i}}\right\vert \geq kn(v-k)/(v-k)=kn=\sum_{i\in V}\left\vert {C_{i}}\right\vert, 
\end{equation*}%
since at left hand side each $A\in\mathfrak{U}$ gives $k$ distinct sets in its boundary, and it is counted once for every $i\notin A$. Some sets in boundaries of sets from $B_{i}$
can be the same, but their number is at most $(v-1)-(k-1)=v-k$. On the other
hand each $A\in\mathfrak{U}$ is counted $k$ times at the right side. Hence we can find a desired $i$, or
the above bounds are tight. In the latter case, when $A\in \Delta B_{i}$%
, all $v-k$ possibilities of completing it to a $k$-element set has to be in 
$\mathfrak{U}$. This means that $\mathfrak{U}$ consists of all possible $k$-element subsets of $V$ because from any set in $\mathfrak{U}$ we 
can delete any element and insert any other.
\end{proof}

\begin{lemma}
\label{characterization} If $\Delta $ is an extremal simplicial complex of positive dimension, then there exists a vertex $x$ such that both $\lk_{\Delta }\{x\}$ and $\Delta\setminus x$ are extremal.
\end{lemma}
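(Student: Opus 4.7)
The plan is to extract the shedding vertex $x$ directly from Lemma \ref{i} applied to the facet set $\mathfrak{U}$ of $\Delta$ (each facet being a $(d+1)$-element set). Lemma \ref{i} gives two cases: either $\mathfrak{U}$ is the set of all $(d+1)$-subsets of its underlying vertex set $V$, or there is a vertex $x \in V$ with $|\Delta B_x| > |C_x|$. In the first case $\Delta$ is the $d$-skeleton of the simplex on $V$; any vertex works, because $\lk_{\Delta}\{x\}$ and $\Delta \setminus x$ are themselves skeletons of simplices on $V\setminus\{x\}$ and therefore satisfy $f_{k-1}=\delta_k(f_k)$ directly. The remaining work is to verify that the $x$ produced in the second case has the required properties.

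I would split the $(d-1)$-faces of $\Delta$ according to whether they contain $x$, yielding
\[
\delta_d(n) \;=\; f_{d-1}(\Delta) \;=\; |\Delta C_x| \;+\; |\Delta B_x \cup C_x|,
\]
with $n = b_x + c_x$. The first summand, via $\tau \mapsto \{x\} \cup \tau$, enumerates $(d-2)$-faces of $\lk_\Delta\{x\}$, and the second counts $(d-1)$-faces avoiding $x$. Applying Kruskal--Katona (Theorem \ref{KrKt}) to $C_x$ (as $d$-element sets) and to $B_x$ (as $(d+1)$-element sets), together with the trivial bound $|\Delta B_x \cup C_x| \geq |\Delta B_x|$, yields
\[
\delta_d(n) \;\geq\; \delta_{d-1}(c_x) \;+\; \delta_d(b_x).
\]

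The heart of the proof is the reverse inequality $\delta_d(b_x+c_x) \leq \delta_{d-1}(c_x) + \delta_d(b_x)$, which collapses every bound above to an equality. I would prove it by exhibiting a witness family of $b_x+c_x$ sets of size $d+1$: take $S_{d+1}(b_x)$ together with $\{\{N\} \cup \sigma : \sigma \in S_d(c_x)\}$ for an integer $N$ larger than any element appearing in $S_{d+1}(b_x)$. Using the standard identity $\Delta S_{d+1}(b) = S_d(\delta_d(b))$ for squashed initial segments and the containment $S_d(c_x) \subseteq \Delta S_{d+1}(b_x)$ (which rests on $c_x \leq \delta_d(b_x)$), the boundary of this family is $\Delta S_{d+1}(b_x) \cup \{N\}(\cup)\Delta S_d(c_x)$, of size exactly $\delta_d(b_x)+\delta_{d-1}(c_x)$.

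Once equality is forced everywhere, three conclusions follow simultaneously: $|\Delta C_x| = \delta_{d-1}(c_x)$ gives extremality of $\lk_\Delta\{x\}$ (pure of dimension $d-1$ by purity of $\Delta$); $|\Delta B_x \cup C_x| = |\Delta B_x|$ forces $C_x \subseteq \Delta B_x$, so every $(d-1)$-face avoiding $x$ lies in a facet avoiding $x$, whence $\Delta \setminus x$ is pure of dimension $d$; and $|\Delta B_x| = \delta_d(b_x)$ gives extremality of $\Delta \setminus x$. The main obstacle is verifying that the $x$ supplied by Lemma \ref{i} actually satisfies $c_x \leq \delta_d(b_x)$, needed to justify the witness construction in the reverse inequality. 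The raw hypothesis $|\Delta B_x| > c_x$ together with $|\Delta B_x| \geq \delta_d(b_x)$ does not directly yield it, so closing the loop will require combining $|\Delta B_x| > |C_x|$ with the extremality of $\Delta$ in a more delicate way --- the subtle place where Hilton's technique must be adapted to the ``extremality-preserving'' setting.
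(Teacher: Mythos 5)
Your setup is exactly the paper's: Hilton's decomposition of the $(d-1)$-faces of $\Delta$ according to whether they contain the vertex $x$ supplied by Lemma \ref{i}, compared against the witness family $S_{d+1}(b_x)\cup(\{N\}(\cup)S_{d}(c_x))$ (the paper uses the initial segments $S^{i}_{k}$ of sets avoiding $i$ instead of adjoining a fresh large element $N$, an immaterial difference), and the same three forced equalities at the end. But the step you flag as unresolved --- establishing $c_x\leq\delta_d(b_x)$, equivalently $S_d(c_x)\subseteq\Delta S_{d+1}(b_x)$ --- is a genuine gap, not a technicality: without that hypothesis your ``reverse inequality'' $\delta_d(b_x+c_x)\leq\delta_{d-1}(c_x)+\delta_d(b_x)$ is simply false as a statement about integers (take $d=1$, $b=1$, $c=3$: the left side is $\delta_1(4)=4$ while the right side is $1+2=3$), so the chain of inequalities never collapses to equality and none of your three conclusions follows.

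The missing argument is short, and it is exactly where the strict inequality from Lemma \ref{i} gets consumed. Both $S_d(c_x)$ and $\Delta S_{d+1}(b_x)=S_d(\delta_d(b_x))$ are initial segments of the squashed order, so one contains the other. Suppose $S_d(c_x)\supsetneq\Delta S_{d+1}(b_x)$. Your witness family $\mathfrak{W}=S_{d+1}(b_x)\cup(\{N\}(\cup)S_{d}(c_x))$ still has $b_x+c_x=f_d(\Delta)$ members, but its shadow is now $S_d(c_x)\cup(\{N\}(\cup)\Delta S_d(c_x))$, of cardinality $c_x+\delta_{d-1}(c_x)$. On the other hand,
\begin{equation*}
f_{d-1}(\Delta)=\left\vert\Delta C_x\right\vert+\left\vert\Delta B_x\cup C_x\right\vert\geq\delta_{d-1}(c_x)+\left\vert\Delta B_x\right\vert>\delta_{d-1}(c_x)+c_x=\left\vert\Delta\mathfrak{W}\right\vert,
\end{equation*}
using Kruskal--Katona on $C_x$ and the defining property $\left\vert\Delta B_x\right\vert>\left\vert C_x\right\vert$ of $x$. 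So $\Delta$ has strictly more $(d-1)$-faces than a pure complex with the same number of facets, contradicting extremality. Hence $c_x\leq\delta_d(b_x)$, and the remainder of your argument (the forced equalities, $C_x\subseteq\Delta B_x$ yielding purity of $\Delta\setminus x$, and the identifications of $\lk_\Delta\{x\}$ and $\Delta\setminus x$ with the complexes generated by $C_x$ and $B_x$) goes through as you describe; with this case supplied, your proof coincides with the paper's case distinction (1) versus (2).
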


\begin{proof}
Let $\Delta$ be of dimension $d-1>0$ and let $\mathfrak{U}$ be the set of all $d$-element sets in $\Delta$. If $\mathfrak{U}$ consists of all possible $d$-element subsets of a given $v$%
-element set, then the assertion of the lemma is clearly true (we can take any vertex). Otherwise,
due to Lemma \ref{i}, there exists an $i\in V$ such that $\left\vert {%
\Delta B_{i}}\right\vert >\left\vert {C_{i}}\right\vert $. We have that 
\begin{equation*}
\Delta\mathfrak{U}=\Delta B_{i}\cup C_{i}\cup (\{i\}(\cup )\Delta C_{i}).
\end{equation*}%
Since $\Delta B_{i}$ and $\{i\}(\cup )\Delta C_{i}$ are disjoint, it follows
that 
\begin{equation*}
\left\vert \Delta\mathfrak{U}\right\vert \geq \left\vert \Delta B_{i}\right\vert
+\left\vert \{i\}(\cup )\Delta C_{i}\right\vert > \left\vert {C_{i}}\right\vert
+\left\vert \{i\}(\cup )\Delta C_{i}\right\vert.
\end{equation*}%
So, by Theorem \ref{KrKt}, 
\begin{equation}
\left\vert \Delta\mathfrak{U}\right\vert \geq \left\vert \Delta
S_{d}^{i}(b_{i})\right\vert +\left\vert \{i\}(\cup )\Delta
S_{d-1}^{i}(c_{i})\right\vert ,  \label{1}
\end{equation}%
and
\begin{equation}
\left\vert \Delta\mathfrak{U}\right\vert >\left\vert S_{d-1}^{i}(c_{i})\right\vert
+\left\vert \{i\}(\cup )\Delta S_{d-1}^{i}(c_{i})\right\vert .  \label{2}
\end{equation}%
Since $\Delta S_{d}^{i}(b_{i})=S_{d-1}^{i}(e)$ for some $e$, there are now two possibilities:

\begin{enumerate}
\item $\Delta S_{d}^{i}(b_{i})\subset S_{d-1}^{i}(c_{i})$, then by (\ref{2})
we get\newline
$\left\vert \Delta\mathfrak{U}\right\vert >\left\vert S_{d-1}^{i}(c_{i})\right\vert
+\left\vert \{i\}(\cup )\Delta S_{d-1}^{i}(c_{i})\right\vert =\left\vert
\Delta (S_{d}^{i}(b_{i})\cup (\{i\}(\cup )S_{d-1}^{i}(c_{i})))\right\vert$,
which contradicts the assumption that $\Delta$ is extremal, since a complex generated by sets $S_{d}^{i}(b_{i})\cup (\{i\}(\cup )S_{d-1}^{i}(c_{i}))$ has  $b_i+c_i=\left\vert\mathfrak{U}\right\vert$ maximal faces.

\item $\Delta S_{d}^{i}(b_{i})\supset S_{d-1}^{i}(c_{i})$,
then by (\ref{1}) we obtain\newline
$\left\vert \Delta\mathfrak{U}\right\vert \geq \left\vert \Delta
S_{d}^{i}(b_{i})\right\vert +\left\vert \{i\}(\cup )\Delta
S_{d-1}^{i}(c_{i})\right\vert =\left\vert \Delta (S_{d}^{i}(b_{i})\cup
(\{i\}(\cup )S_{d-1}^{i}(c_{i})))\right\vert$,
and equality holds if and only if $C_{i}\subset \Delta B_{i}$,$\;\left\vert
\Delta B_{i}\right\vert =\left\vert \Delta S_{d}^{i}(b_{i})\right\vert $, and\newline
$\;\left\vert \Delta C_{i}\right\vert =\left\vert \{i\}(\cup )\Delta
S_{d-1}^{i}(c_{i})\right\vert =\left\vert \Delta
S_{d-1}^{i}(c_{i})\right\vert $.
\end{enumerate}

The complex $\Delta $ is extremal, so
equality holds, and we get that $C_{i}\subset \Delta B_{i}$ and $[B_i],[C_i]$ are extremal, where $[A]$ means the simplicial complex 
generated by the set of faces $A$. Observe that $\lk_{\Delta }\{i\}=[C_i]$ and $\Delta\setminus i=[B_i]$. The first equality is obvious, 
while the second is not  as clear. If $\sigma=\{v_1,\dots,v_k\}$ is a face in $\Delta\setminus i$ then it is a subface of some facet 
$F=\{v_1,\dots,v_d\}$. If $i$ does not belong to $F$ then $F\in [B_i]$ and so $\sigma$ does. Otherwise, $F\setminus \{i\}\in C_i\subset \Delta B_i$, 
so $F\setminus \{i\}\cup \{j\}\in B_i$ for some $j$. Hence $\sigma\in [B_i]$. Now $i=x$ gives the assertion.
\end{proof}

Finally, we are ready to prove the generalization of Theorem \ref{main}.

\begin{theorem}\label{mainn}
\label{main2} An extremal simplicial complex is vertex decomposable.
\end{theorem}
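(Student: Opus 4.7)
The plan is to read the theorem off from Lemma \ref{characterization} by a short induction. That lemma already produces, for any extremal complex of positive dimension, exactly the vertex $x$ required by clause (3) of the definition of vertex decomposability, and it guarantees that both $\lk_\Delta\{x\}$ and $\Delta\setminus x$ are again extremal; since extremality is defined only for pure complexes, the purity hypothesis in clause (3) is automatic. Hence essentially all that remains is to set up the induction so that the two subcomplexes count as strictly smaller.

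I would induct on the quantity $d+n$, where $d=\dim\Delta$ and $n$ is the number of facets of $\Delta$. The base cases are immediate: if $\Delta$ is empty or a single vertex, clauses (1) or (2) of the definition apply. If $\dim\Delta=0$ with $n\ge 2$ isolated vertices, Lemma \ref{characterization} does not apply directly (it assumes positive dimension), but any vertex $x$ works: $\lk_\Delta\{x\}$ is empty, and $\Delta\setminus x$ is a zero-dimensional complex on $n-1$ vertices, which is extremal (all zero-dimensional complexes are) and vertex decomposable by the inductive hypothesis.

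For the inductive step with $d\ge 1$, I invoke Lemma \ref{characterization} to obtain a vertex $x$ such that $\lk_\Delta\{x\}$ and $\Delta\setminus x$ are both extremal. With the notation of the lemma, $\lk_\Delta\{x\}=[C_x]$ has dimension $d-1$ and $c_x$ facets, so its parameter is $(d-1)+c_x<d+n$ regardless of the value of $c_x$; and $\Delta\setminus x=[B_x]$ has dimension $d$ and $b_x=n-c_x$ facets, and since the vertex $x$ must lie in at least one facet we have $c_x\ge 1$, so $b_x<n$ and the parameter again drops. By the inductive hypothesis both subcomplexes are vertex decomposable, so $x$ witnesses clause (3) for $\Delta$.

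There is no real obstacle beyond the choice of inductive measure. The whole combinatorial weight of the argument sits inside Lemma \ref{characterization}. Neither dimension alone (it does not decrease under $\Delta\setminus x$) nor the number of facets alone (it may fail to decrease under $\lk_\Delta\{x\}$) gives a valid induction, but the sum $d+n$ strictly decreases in both moves, which is the only point that needs to be checked.
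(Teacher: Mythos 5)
Your proof is correct and is essentially the paper's own argument: both deduce the theorem from Lemma \ref{characterization} by induction, the only difference being your measure $d+n$ versus the paper's lexicographic induction on dimension and then number of facets, which is immaterial (and your explicit treatment of the zero-dimensional base case and of purity is, if anything, slightly more careful than the paper's).
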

\begin{proof}
The proof goes by an induction on $d$ the dimension of $\Delta$ and secondly on the number of facets. 
If $d=0$ then $\Delta$ consists of points and by the definition it is vertex decomposable. When $d>0$, then by Lemma \ref{characterization} 
there exists a vertex $x$, such that both complexes $\lk_{\Delta }\{x\}$ and $\Delta\setminus x$ are extremal. The first is of lower dimension; 
and the second either has the same dimension as $\Delta$ but fewer facets, or it has smaller dimension. By the inductive hypothesis, 
both $\lk_{\Delta }\{x\}$ and $\Delta\setminus x$ are vertex decomposable, and as a consequence $\Delta$ also is.
\end{proof}

The above result is best possible in the following sense.  Let $\Delta$ be a pure simplicial complex of dimension $d>0$ with $f$-vector 
$(f_{0},\dots,f_{d})$, and with $f_{d-1}=\delta_{d}(f_{d})+c$, $c\in\N$. Due to Corollary \ref{ext} the meaning of Theorem \ref{mainn} is that if $c=0$, 
then $\Delta$ is vertex decomposable. But even for $c=1$ complex $\Delta$ does not have to be Cohen-Macaulay, which by Theorem \ref{bjorner} 
is a weaker property then vertex decomposability. We show the following.

\begin{example}
We have that $\delta_{d}(2)=2d+1$. Let $\Delta$ be a pure simplicial complex of dimension $d$ with the set of  facets $\mathfrak{U}$ consisting 
of two disjoint ones. Then $\left\vert\Delta\mathfrak{U}\right\vert=2d+2$, and $\tilde{H}_{0}(\lk_{\Delta}\emptyset;\mathbb{K})=1$, 
so due to Theorem \ref{cm} complex $\Delta$ is not Cohen-Macaulay over any field $\mathbb{K}$, and as a consequence it is also not vertex decomposable.
\end{example}

\section*{Acknowledgements}

I would like to thank Ralf Fr\"{o}berg and J\"{u}rgen Herzog for many inspiring conversations and introduction to this subject. 
I would also like to thank Jarek Grytczuk and Piotr Micek for help in preparation of this manuscript.


\begin{thebibliography}{99}

\bibitem{arhehi} A.~Aramova, J.~Herzog, T.~Hibi, Gotzmann theorems for exterior algebras and combinatorics, J. Algebra 191 (1997), 174-211.

\bibitem{bj} A.~Bj\"{o}rner, Topological methods, in Handbook of Combinatorics, R.~Graham, M.~Gr\"{o}tschel, L.~Lov\'{a}sz, Elsevier, Amsterdam (1995), 1819-1872.

\bibitem{clli} G. Clements, B. Lindst\"{o}m, A generalization of a
combinatorial theorem of Macaulay, J. Combin. Theory 7 (1969), 230-238.

\bibitem{da1} D. Daykin, A simple proof of the Kruskal-Katona theorem, J.
Combin. Theory A 17 (1974), 252-253.

\bibitem{da2} D. Daykin, An algorithm for cascades giving Katona-type
inequalities, Nanta Math. 8 (1975), 78-83.

\bibitem{eare} J. Eagon, V. Reiner, Resolutions of Stanley-Reisner rings and
Alexander duality, J. Pure Appl. Algebra 130 (1989), 265-275.

\bibitem{ha} A. Hatcher, Algebraic Topology, Cambridge University Press 2002.

\bibitem{hehi} J. Herzog, T. Hibi, Componentwise linear ideals, Nagoya Math.
J. 153 (1999), 141-153.

\bibitem{hi} A. Hilton, A simple proof of the Kruskal-Katona theorem and of
some associated binomial inequalities, Period. Math. Hungar. 10 (1979), no.
1, 25-30.

\bibitem{ka} G. Katona, A theorem of finite sets, Theory of Graphs (Proc.
Colloq. Tihany, 1966), 187-207. Academic Press, New York, 1968.

\bibitem{kr} J. Kruskal, The number of simplices in a complex, Mathematical
Optimalization Techniques, University of California Press, Berkeley, 1963,
251-278.

\bibitem{lami} M. Laso\'{n}, M. Micha\l ek, On the full, strongly exceptional collections on toric varieties with Picard number three, Coll. Math. 62 (2011), no. 3,
275-296.

\bibitem{re} G. Reisner, Cohen-Macaulay quotients of polynomial rings,
Advances in Math. 21 (1976), no. 1, 30-49.

\bibitem{st} R. Stanley, Combinatorics and Commutative Algebra, Birkh\"{a}user.

\end{thebibliography}
\end{document}